\def\thtext#1{
  \catcode`@=11
  \gdef\@thmcountersep{. #1}
  \catcode`@=12
}
\def\threst{
  \catcode`@=11
  \gdef\@thmcountersep{.}
  \catcode`@=12
}
\theoremstyle{plain}
\newtheorem{thm}{Theorem}[section]
\newtheorem{prop}[thm]{Proposition}
\newtheorem{cor}[thm]{Corollary}
\theoremstyle{definition}
\newtheorem{dfn}[thm]{Definition}
\newtheorem{rk}[thm]{Remark}
\newtheorem{constr}[thm]{Construction}
\newtheorem{agree}[thm]{Agreement}
 \def\.{.\spacefactor\@m}
\def\R{\mathbb R}
\def\e{\varepsilon}
\def\dl{\delta}
\def\r{\rho}
\def\s{\sigma}
\def\0{\emptyset}
\def\:{\colon}
\def\rom#1{\emph{#1}}
\def\({\rom(}
\def\){\rom)}
\def\sm{\setminus}
\def\ss{\subset}
\def\x{\times}
\def\bR{{\bar R}}
\def\bsg{{\bar\sigma}}
\def\bx{{\bar x}}
\def\by{{\bar y}}
\def\dis{\operatorname{dis}}
\def\opt{{\operatorname{opt}}}
\def\cD{{\cal D}}
\def\cH{{\cal H}}
\def\cM{{\cal M}}
\def\cP{{\cal P}}
\def\cR{{\cal R}}
\begin{document}
 \title{Realizations of Gromov--Hausdorff Distance.}
\author{A.~Ivanov, S.~Iliadis, A.~Tuzhilin}
\date{}
\maketitle

\begin{abstract}
It is shown that for any two compact metric spaces there exists an ``optimal'' correspondence which the Gromov--Hausdorff distance is attained at. Each such correspondence generates isometric embeddings of these spaces into a compact metric space such that the Gromov--Hausdorff distance between the initial spaces is equal to the Hausdorff distance between their images. Also, the optimal correspondences could be used for constructing the shortest curves in the Gromov--Hausdorff space in exactly the same way as it was done by Alexander Ivanov, Nadezhda Nikolaeva, and Alexey Tuzhilin in~\cite{IvaNikolaevaTuz}, where it is proved that the Gromov--Hausdorff space is geodesic. Notice that all proofs in the present paper are elementary and use no more than the idea of compactness.
\end{abstract}

\section*{Introduction}
\markright{\thesection.~Introduction}
The Gromov--Hausdorff distance between metric spaces plays an important role in the Modern Mathematics, see~\cite{BurBurIva}. The restriction of this distance to the isometry classes of compact metric spaces generates the Gromov--Hausdorff metric space. It is well-known that the Gromov--Hausdorff space is Polish (i.e., it is complete and separable), path-connected and, as it was recently shown in~\cite{IvaNikolaevaTuz}, geodesic. To prove the latter result, the authors used the technic of correspondences together with the evident fact that for finite metric spaces there always exists a correspondence which the Gromov--Hausdorff distance is attained at. In the present paper we show that such ``optimal'' correspondences exist for arbitrary compact metric spaces. The latter, together with a standard procedure, enables to construct some metric space and isometric embeddings of the initial spaces into it, such that the Gromov--Hausdorff distance between the initial spaces equals to the Hausdorff distance between their images. Existence of such ``realization'' for a pair of compact metric spaces in terms of ultraproducts was mentioned in MathOverflow~\cite{blog}.

Repeating word-by-word the construction from~\cite{IvaNikolaevaTuz}, one can use each optimal correspondence to connect any two compact metric spaces by a shortest curve in Gromov--Hausdorff space.

When the present paper was completely prepared for publication, we found the paper~\cite{Memoli}, where some our results were independently obtained. Nevertheless, we decided to submit our paper ``as is,'' because it is slightly wider than~\cite{Memoli}, contains a construction of the realization, and also a few other technical results which might help in further investigation of the Gromov--Hausdorff space geometry.

\section{Preliminaries}
\markright{\thesection.~Preliminaries}
Throughout this paper $X$ and $Y$ are always nonempty sets.

Let $X$ be a metric space. The distance between its points $x,\,y\in X$ is denoted by $\r(x,y)=|xy|$.

Let $\cP(X)$ be the set of all \textbf{nonempty} subsets of the space $X$. For each $A,\,B\in\cP(X)$ and $x\in X$ we put
\begin{flalign*}
\indent&|xA|=|Ax|=\inf\bigl\{|xa|:a\in A\bigr\},&\\
\indent&d(A,B)=\sup\bigl\{|aB|:a\in A\bigr\}=\sup_{a\in A}\inf_{b\in B}|ab|,&\\
\indent&d_H(A,B)=\max\{d(A,B),d(B,A)\}=\max\bigl\{\sup_{a\in A}\inf_{b\in B}|ab|,\,\sup_{b\in B}\inf_{a\in A}|ba|\bigr\}.
\end{flalign*}

\begin{dfn}
The function $d_H$ is called the \emph{Hausdorff distance}.
\end{dfn}

\begin{rk}
In general, the Hausdorff distance is not a metric because it may vanish for unequal subsets. For instance, this is so for the segment $A=[a,b]$ and half-interval $B=[a,b)$, both are considered as subsets of the real line $\R$ with the standard distance function. Also, the Hausdorff distance may be infinite, say, between a point and a line.
\end{rk}

By $\cH(X)$ we denote the family of all nonempty closed bounded subsets of a metric space $X$.

\begin{thm}[\cite{BurBurIva}]
The function $d_H$ is a metric on $\cH(X)$.
\end{thm}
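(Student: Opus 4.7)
The plan is to verify the four metric axioms for $d_H$ on $\cH(X)$: nonnegativity with finiteness, symmetry, identity of indiscernibles, and the triangle inequality. Symmetry is immediate from the definition, and nonnegativity is obvious since $d_H$ is a supremum of infima of nonnegative quantities. For finiteness on $\cH(X)$, I would fix arbitrary $a_0\in A$ and $b_0\in B$ and use boundedness to bound $|ab|\le\operatorname{diam}A+|a_0b_0|+\operatorname{diam}B$ uniformly in $a\in A,b\in B$, so that $d_H(A,B)<\infty$.

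For identity of indiscernibles, one direction is trivial: if $A=B$ then $|aB|=0$ for every $a\in A$ and vice versa, hence $d_H(A,B)=0$. Conversely, if $d_H(A,B)=0$ then both $d(A,B)=0$ and $d(B,A)=0$, so each $a\in A$ satisfies $|aB|=0$, meaning $a$ lies in the closure of $B$. This is the one place closedness is essential: since $B\in\cH(X)$ is closed, $a\in B$, so $A\ss B$, and the symmetric argument yields $B\ss A$.

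The main substantive step is the triangle inequality $d_H(A,C)\le d_H(A,B)+d_H(B,C)$. I would first prove the one-sided version $d(A,C)\le d(A,B)+d(B,C)$ by the following $\e$-argument: for fixed $a\in A$ and any $\e>0$, pick $b\in B$ with $|ab|\le|aB|+\e$, then $c\in C$ with $|bc|\le|bC|+\e$; the ordinary triangle inequality in $X$ gives
\[
|aC|\le|ac|\le|ab|+|bc|\le|aB|+|bC|+2\e\le d(A,B)+d(B,C)+2\e.
\]
Letting $\e\to0$ and taking supremum over $a\in A$ yields the asymmetric inequality. Applying the same reasoning with the roles of $A$ and $C$ swapped gives $d(C,A)\le d(C,B)+d(B,A)$. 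Taking the maximum of the two inequalities and using $\max\{u+v,u'+v'\}\le\max\{u,u'\}+\max\{v,v'\}$ produces the desired bound on $d_H(A,C)$.

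The only potential obstacle is the identity-of-indiscernibles step, where one must notice that without closedness the example of $[a,b]$ and $[a,b)$ from the earlier remark shows $d_H$ is only a pseudometric; everything else is a routine manipulation of suprema and infima.
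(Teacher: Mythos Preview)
Your proof is correct and follows the standard argument. However, the paper itself does not give a proof of this theorem: it is stated with a citation to~\cite{BurBurIva} and no proof appears in the text, so there is nothing in the paper to compare your argument against. What you have written is essentially the classical verification (and indeed the one found in the cited reference), including the correct observation that closedness is needed precisely at the identity-of-indiscernibles step.
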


The following important property of the space $\cH(X)$ endowed with the Hausdorff metric will be useful.

\begin{thm}[\cite{BurBurIva}]\label{thm:Hausd_dist_comp}
The metric spaces $\cH(X)$ and $X$ are compact or non-compact simultaneously.
\end{thm}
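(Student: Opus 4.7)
The plan is to prove the two implications separately, using the standard characterization of compactness in metric spaces as completeness plus total boundedness.

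For the easy direction, that compactness of $\cH(X)$ implies compactness of $X$, I would observe that $x\mapsto\{x\}$ is an isometric embedding $X\to\cH(X)$, since $d_H(\{x\},\{y\})=|xy|$. Its image is closed: if $\{x_n\}\to A$ in Hausdorff distance, then $\operatorname{diam} A\le 2\,d_H(A,\{x_n\})\to 0$, so $A$ must be a singleton. Thus $X$ is homeomorphic to a closed subspace of the compact space $\cH(X)$ and is therefore compact.

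Conversely, assume $X$ is compact. Then $X$ has finite diameter, so $\cH(X)$ consists of all nonempty closed (equivalently, compact) subsets. For total boundedness, fix $\e>0$ and a finite $\e$-net $N\ss X$; the finite family of nonempty subsets of $N$ is an $\e$-net in $\cH(X)$, because for any $A\in\cH(X)$ the set $N_A=\{n\in N : |nA|\le\e\}$ satisfies $d_H(A,N_A)\le\e$ (each $a\in A$ is within $\e$ of some $n\in N$, which then belongs to $N_A$; and each $n\in N_A$ is within $\e$ of $A$ by construction).

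The main obstacle is completeness. Given a Cauchy sequence $(A_n)\ss\cH(X)$, I would take as candidate limit
\[
A=\bigcap_{k\ge 1}\,\overline{\bigcup_{n\ge k}A_n},
\]
that is, the set of all limits of sequences $a_{n_k}\in A_{n_k}$ with $n_k\to\infty$. Nonemptiness follows by picking $a_n\in A_n$ and extracting a convergent subsequence via compactness of $X$, and closedness is automatic. To verify $d_H(A_n,A)\to 0$, one estimate uses directly that each $x\in A$ is approximated by some $a_{n_k}\in A_{n_k}$, whose distance to $A_n$ is then controlled by the Cauchy condition. The reverse estimate requires a telescoping construction: passing to a subsequence with $d_H(A_{n_k},A_{n_{k+1}})<\e/2^{k+1}$, one inductively builds, starting from any given $a\in A_n$, a Cauchy sequence in the $A_{n_k}$ with geometrically decreasing errors, whose limit lies in $A$ within $\e$ of $a$. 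This telescoping argument with $\e/2^k$ estimates is the technical core.
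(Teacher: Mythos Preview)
Your argument is correct and is essentially the standard proof of this classical fact (often called the Blaschke selection theorem in the forward direction). However, there is nothing to compare it against: the paper does not supply a proof of this theorem at all. The statement is quoted from~\cite{BurBurIva} and used as a black box; no argument appears in the paper. So your proposal is not an alternative to the paper's proof but rather a proof where the paper offers none.

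A couple of minor remarks on the write-up. In the total boundedness step, you should note explicitly that $N_A$ is nonempty (which it is, since $A\ne\emptyset$), so that $N_A\in\cH(X)$. In the completeness step, the candidate limit $A$ is indeed nonempty by compactness of $X$, but be aware that completeness of $\cH(X)$ actually only requires completeness of $X$, not compactness; your use of compactness here is a shortcut that suffices for the present statement. The telescoping construction you describe for the estimate $d(A_n,A)\to 0$ is the standard one and works as stated.
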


Let $X$ and $Y$ be metric spaces. The triple $(X',Y',Z)$ consisting of a metric space $Z$ and its subsets $X'$ and $Y'$ isometric to $X$ and $Y$, respectively, is called a \emph{realization of the pair $(X,Y)$}. \emph{The Gromov--Hausdorff distance $d_{GH}(X,Y)$ between $X$ and $Y$} is the infimum of those $r$ for which there exists a realization $(X',Y',Z)$ of the pair $(X,Y)$ such that $d_H(X',Y')\le r$.

It turns out that under the Gromov--Hausdorff distance between spaces $X$ and $Y$ calculation, one can consider a rather small set of realizations of the pair $(X,Y)$.  By $\cD(X,Y)$ we denote the set of all pseudometrics $\r$ on $X\sqcup Y$ such that their restrictions onto $X$ and $Y$ coincide with the initial metrics of these spaces. Let $(X\sqcup Y)/\r$ denote the metric space obtained from $X\sqcup Y$ by identifying the points on zero $\r$-distance. Notice that the restrictions of the canonical projection $\pi\:X\sqcup Y\to(X\sqcup Y)/\r$ onto $X$ and $Y$ are isometric embeddings. We put $d_H(X,Y,\r)=d_H\bigl(\pi(X),\pi(Y)\bigr)$.

\begin{thm}[\cite{BurBurIva}]\label{thm:dGHasInfHasudorffForPseudometrics}
Under the above notations, we have
$$
d_{GH}(X,Y)=\inf_{\r\in\cD(X,Y)}d_H(X,Y,\r).
$$
\end{thm}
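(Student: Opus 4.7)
My plan is to prove the two inequalities separately, each by a direct translation between realizations and pseudometrics on $X\sqcup Y$.

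For the inequality $\inf_{\r\in\cD(X,Y)}d_H(X,Y,\r)\le d_{GH}(X,Y)$, I would start with an arbitrary realization $(X',Y',Z)$ of the pair $(X,Y)$, together with the witnessing isometries $f\:X\to X'$ and $g\:Y\to Y'$. I would define $\r$ on $X\sqcup Y$ by declaring it to agree with the original metric on each of $X$ and $Y$ and by setting $\r(x,y)=\bigl|f(x)g(y)\bigr|_Z$ for $x\in X$, $y\in Y$. The triangle inequality for $\r$ is inherited from the metric on $Z$ via $f$ and $g$, so $\r\in\cD(X,Y)$. Moreover, since the projection $\pi\:X\sqcup Y\to(X\sqcup Y)/\r$ restricts to isometries on $X$ and $Y$, and since the Hausdorff distance depends only on the pairwise distances between the two sets, I obtain $d_H(X,Y,\r)=d_H(X',Y')$. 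Taking infima on both sides yields the desired inequality.

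For the reverse inequality $d_{GH}(X,Y)\le\inf_{\r\in\cD(X,Y)}d_H(X,Y,\r)$, I would take an arbitrary $\r\in\cD(X,Y)$ and use the quotient $Z=(X\sqcup Y)/\r$ together with the canonical projection $\pi$ as the realization. By the very definition of $\cD(X,Y)$, the restrictions $\pi|_X$ and $\pi|_Y$ are isometric embeddings, so $\bigl(\pi(X),\pi(Y),Z\bigr)$ is a realization of $(X,Y)$. By construction, $d_H(X,Y,\r)=d_H\bigl(\pi(X),\pi(Y)\bigr)$, hence $d_{GH}(X,Y)\le d_H(X,Y,\r)$. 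Taking the infimum over $\r$ completes the proof.

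Combining the two bounds yields the stated equality. I don't foresee a real obstacle here: the whole content is the observation that an arbitrary ambient space $Z$ in a realization may, without loss, be replaced by the metric structure it induces on the (abstract) union $X\sqcup Y$, and conversely that every such induced pseudometric genuinely arises from a realization via the standard quotient construction. The only points requiring any care are the routine verification of the triangle inequality for the pullback pseudometric and the bookkeeping check that Hausdorff distances match on both sides of the correspondence.
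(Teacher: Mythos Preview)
The paper does not supply its own proof of this theorem; it is quoted from~\cite{BurBurIva} and stated without argument. Your two-inequality proof is correct and is precisely the standard argument one finds in that reference: pull back the ambient metric of an arbitrary realization to obtain a pseudometric in $\cD(X,Y)$ with the same Hausdorff distance, and conversely pass from any $\r\in\cD(X,Y)$ to the quotient realization $(X\sqcup Y)/\r$. There is nothing to compare against here, and no gap in your reasoning.
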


\begin{thm}[\cite{BurBurIva}]
The Gromov--Hausdorff distance is a metric on the isometry classes of compact metric spaces.
\end{thm}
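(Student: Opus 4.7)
The proof breaks into four standard pieces: non-negativity/finiteness, symmetry, triangle inequality, and the separation axiom on isometry classes.

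Non-negativity is immediate from the definition as an infimum of non-negative quantities, and symmetry is obvious because the roles of $X$ and $Y$ in $\cD(X,Y)$ are interchangeable. Finiteness on compact metric spaces follows by exhibiting one admissible pseudometric: for compact $X,Y$ of diameters at most $D$, set $\r(x,y)=D$ for all $x\in X$, $y\in Y$ (and extend by the given intrinsic metrics on $X$ and $Y$); this is easily checked to lie in $\cD(X,Y)$, so $d_{GH}(X,Y)\le D<\infty$.

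For the triangle inequality $d_{GH}(X,Z)\le d_{GH}(X,Y)+d_{GH}(Y,Z)$, I would argue via Theorem~\ref{thm:dGHasInfHasudorffForPseudometrics}. Fix $\e>0$ and pick $\r_1\in\cD(X,Y)$ and $\r_2\in\cD(Y,Z)$ with $d_H(X,Y,\r_1)<d_{GH}(X,Y)+\e$ and $d_H(Y,Z,\r_2)<d_{GH}(Y,Z)+\e$. On $X\sqcup Y\sqcup Z$ define
\[
\r(x,z)=\inf_{y\in Y}\bigl(\r_1(x,y)+\r_2(y,z)\bigr)
\]
for $x\in X$, $z\in Z$, extended by $\r_1,\r_2$ and the original metrics. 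A routine verification (testing the triangle inequality on all triples by splitting on which summands sit in which of $X,Y,Z$, and using the triangle inequalities for $\r_1$ and $\r_2$) shows $\r$ is a pseudometric on $X\sqcup Y\sqcup Z$; its restriction to $X\sqcup Z$ lies in $\cD(X,Z)$. The triangle inequality for the Hausdorff distance applied to $\pi(X),\pi(Y),\pi(Z)$ in the quotient then yields
\[
d_H(X,Z,\r)\le d_H(X,Y,\r_1)+d_H(Y,Z,\r_2)<d_{GH}(X,Y)+d_{GH}(Y,Z)+2\e,
\]
and letting $\e\to 0$ finishes this part via Theorem~\ref{thm:dGHasInfHasudorffForPseudometrics}.

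The main obstacle is the separation axiom: for compact $X,Y$, the implication $d_{GH}(X,Y)=0\Rightarrow X$ is isometric to $Y$. For each $n$, Theorem~\ref{thm:dGHasInfHasudorffForPseudometrics} gives $\r_n\in\cD(X,Y)$ with $d_H(X,Y,\r_n)<1/n$, hence for every $x\in X$ there is $f_n(x)\in Y$ with $\r_n\bigl(x,f_n(x)\bigr)<1/n$, and similarly a map $g_n\:Y\to X$ in the other direction. The maps $f_n$ are almost distance preserving: $\bigl||f_n(x)f_n(x')|-|xx'|\bigr|<2/n$, so they are $2/n$-isometries. Fixing a countable dense subset $\{x_k\}\ss X$, compactness of $Y$ lets me extract, by a diagonal argument, a subsequence along which $f_n(x_k)$ converges in $Y$ to some $f(x_k)$ for every $k$; the limit $f$ is an isometric embedding on $\{x_k\}$ and extends by uniform continuity to an isometric embedding $X\to Y$. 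Applying the same argument to $g_n$ produces an isometric embedding $Y\to X$; since $X,Y$ are compact, two mutually isometric embeddings between them force both to be surjective (otherwise one composition would contradict the fact that a self-isometric embedding of a compact metric space is onto), so $X$ and $Y$ are isometric. The compactness hypothesis is essential precisely here — both for the extraction of limits and for surjectivity of the resulting embedding — and this is the step where care is needed to justify the diagonal extraction and the surjectivity.
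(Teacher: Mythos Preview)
Your argument is correct and is essentially the standard proof one finds in~\cite{BurBurIva}: verify the easy axioms directly, glue two admissible pseudometrics along $Y$ to get the triangle inequality, and for the separation axiom produce $\e$-isometries, extract a limit on a countable dense set by compactness, extend, and use that an isometric self-embedding of a compact space is onto. There is nothing to compare against in the paper itself, however: the paper does not prove this theorem but merely quotes it from~\cite{BurBurIva} as background.
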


\begin{dfn}
The set of isometry classes of compact metric spaces endowed with the Gromov--Hausdorff metric is called the \emph{Gromov--Hausdorff space\/} and is denoted by $\cM$.
\end{dfn}

Now we describe a convenient way to calculate Gromov--Hausdorff distance. Recall that \emph{a relation\/} between sets $X$ and $Y$ is a subset of the Cartesian product $X\x Y$.  By $\cP(X,Y)$ we denote the set of all \textbf{nonempty\/} relations between $X$ and $Y$.

\begin{dfn}
A relation $R\ss X\x Y$ between $X$ and $Y$ is called \emph{a correspondence}, if the restrictions onto $R$ of the canonical projections $\pi_X\:(x,y)\mapsto x$ and $\pi_Y\:(x,y)\mapsto y$ are surjective. The set of all correspondences between $X$ and $Y$ is denoted by $\cR(X,Y)$.
\end{dfn}

Notice that any $\s\in\cP(X,Y)$ such that there exists an $R\in\cR(X,Y)$, $R\ss\s$, satisfies $\s\in\cR(X,Y)$; in particular, if $X$ and $Y$ are topological spaces, then the closure $\bR$ of a correspondence $R\in\cR(X,Y)$ is a correspondence inself.

Now, let $\s$ be a nonempty relation between pseudometric spaces $X$ and $Y$.

\begin{dfn}
\emph{The distortion $\dis\s$ of the relation $\s$} is the value
$$
\dis\s=\sup\Bigl\{\bigl||xx'|-|yy'|\bigr|: (x,y)\in\s,\ (x',y')\in\s\Bigr\}.
$$
\end{dfn}

Obviously, if $\0\ne\s_1\ss\s_2\in\cP(X,Y)$, then $\dis\s_1\le\dis\s_2$.

\begin{prop}\label{prop:dis_closure}
If $\bsg$ is the closure of $\s\in\cP(X,Y)$, then $\dis\bsg=\dis\s$.
\end{prop}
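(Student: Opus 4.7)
The plan is to prove the two inequalities $\dis\sigma\le\dis\bsg$ and $\dis\bsg\le\dis\sigma$ separately. The first is immediate: since $\sigma\ss\bsg$ and $\sigma\neq\0$, the monotonicity of distortion under inclusion (noted just before the proposition) gives $\dis\sigma\le\dis\bsg$ at once.

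For the reverse inequality $\dis\bsg\le\dis\sigma$, I would argue by approximating arbitrary pairs in $\bsg$ by pairs in $\sigma$. Fix $(x,y),(x',y')\in\bsg$. By definition of the closure in the product topology on $X\x Y$, there exist sequences $(x_n,y_n)\in\sigma$ and $(x_n',y_n')\in\sigma$ with $x_n\to x$, $y_n\to y$, $x_n'\to x'$, $y_n'\to y'$. For every $n$ the defining inequality of distortion gives
\[
\bigl||x_nx_n'|-|y_ny_n'|\bigr|\le\dis\sigma.
\]

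The key step is then to pass to the limit on the left-hand side. Because the distance function on a pseudometric space is $1$-Lipschitz in each argument (an immediate consequence of the triangle inequality, which holds for pseudometrics as well), we have $|x_nx_n'|\to|xx'|$ and $|y_ny_n'|\to|yy'|$; hence by continuity of the absolute value,
\[
\bigl||xx'|-|yy'|\bigr|\le\dis\sigma.
\]
Taking the supremum over all choices of $(x,y),(x',y')\in\bsg$ yields $\dis\bsg\le\dis\sigma$, completing the proof.

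I do not expect any real obstacle here; the only subtle point worth stating carefully is that we work with the product topology on $X\x Y$, so convergence of a net/sequence in $\bsg$ really does deliver componentwise convergence of both coordinates, which is exactly what is needed to invoke continuity of the metric. (If $\dis\sigma=\infty$ the inequality is vacuous, so no separate case analysis is required.)
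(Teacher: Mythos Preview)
Your proof is correct and follows essentially the same approach as the paper: both argue that $\dis\sigma\le\dis\bsg$ by inclusion, and for the reverse inequality approximate arbitrary pairs in $\bsg$ by pairs in $\sigma$ and use continuity (Lipschitzness) of the pseudometrics to pass to the limit. The only cosmetic differences are that the paper phrases the second half as a proof by contradiction with explicit $\e/3$ estimates rather than sequential limits, and folds the $\dis\sigma=\infty$ case into that contradiction setup rather than dismissing it separately as you do.
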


\begin{proof}
Since $\dis\s\le\dis\bsg$, it suffices to verify the converse inequality. Suppose the contrary: $\dis\s<\dis\bsg$. Notice that this assumption implies $\dis\s<\infty$.

By definition, for each $\e>0$ and any $(\bx,\by),\,(\bx',\by')\in\bsg$ there exist $(x,y),\,(x',y')\in\s$ such that $\bigl||\bx\bx'|-|xx'|\bigr|<\e/3$, $\bigl||\by\by'|-|yy'|\bigr|<\e/3$, and thus
$$
\bigl||\bx\bx'|-|\by\by'|\bigr|<\bigl||xx'|-|yy'|\bigr|+2\e/3\le\dis\s+2\e/3.
$$
Passing to the supremum, we conclude that $\dis\bsg<\dis\s+2\e/3$. Since $\e$ is arbitrary, we have $\dis\bsg\le\dis\s$, a contradiction.
\end{proof}

By $\cP_c(X,Y)$ we denote the set of all closed nonempty relations between $X$ and $Y$; similarly, let $\cR_c(X,Y)$ stand for the set of all closed correspondences between $X$ and $Y$.

\begin{thm}[\cite{BurBurIva}]\label{th:GH-metri-and-relations}
For any metric spaces $X$ and $Y$ we have
$$
d_{GH}(X,Y)=\frac12\inf\bigl\{\dis R:R\in\cR(X,Y)\bigr\}.
$$
\end{thm}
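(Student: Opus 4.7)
The plan is to prove the two inequalities separately, in each direction translating between correspondences and the admissible pseudometrics of Theorem~\ref{thm:dGHasInfHasudorffForPseudometrics}.

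First I would establish $d_{GH}(X,Y)\le\frac12\inf\dis R$. Fix $R\in\cR(X,Y)$ with $d:=\dis R<\infty$ (otherwise there is nothing to prove). I would build a pseudometric $\rho\in\cD(X,Y)$ by ``bridging across $R$'': keep the original distances within $X$ and within $Y$, and for $x\in X$, $y\in Y$ set
$$
\rho(x,y)=\inf_{(x',y')\in R}\bigl(|xx'|+\tfrac d2+|y'y|\bigr).
$$
Symmetry is immediate, and the triangle inequality reduces, via the defining bound $\bigl||x'x''|-|y'y''|\bigr|\le d$ for pairs in $R$, to a routine estimate. Using surjectivity of the two projections of $R$, for every $x\in X$ one finds $y_x\in Y$ with $\rho(x,y_x)\le d/2$, and symmetrically for every $y$; hence $d_H(X,Y,\rho)\le d/2$. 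Theorem~\ref{thm:dGHasInfHasudorffForPseudometrics} and infimizing over $R$ close this half.

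For the reverse inequality $\frac12\inf\dis R\le d_{GH}(X,Y)$, I would work from the pseudometric side. Given any $\rho\in\cD(X,Y)$ and any $r>d_H(X,Y,\rho)$, for each $x\in X$ choose $y_x\in Y$ with $\rho(x,y_x)<r$, and for each $y\in Y$ choose $x_y\in X$ with $\rho(x_y,y)<r$; both are possible because each of the two suprema defining $d_H$ is strictly below $r$. The union $R:=\{(x,y_x):x\in X\}\cup\{(x_y,y):y\in Y\}$ is a correspondence by construction. For any $(a,b),(a',b')\in R$, two applications of the triangle inequality in $\rho$ together with $\rho(a,b),\rho(a',b')<r$ give $\bigl||aa'|-|bb'|\bigr|<2r$, whence $\dis R\le 2r$. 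Letting $r\searrow d_H(X,Y,\rho)$ and infimizing over $\rho\in\cD(X,Y)$, a second appeal to Theorem~\ref{thm:dGHasInfHasudorffForPseudometrics} closes the argument.

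The only slightly delicate point is checking the triangle inequality for the bridging pseudometric $\rho$ in the first half: for a chain $x\to y\to x''\to y''$ one combines intra-$X$ and intra-$Y$ triangle inequalities with a single use of the distortion bound across $R$, and the $d/2$-bridging term is precisely what absorbs that use. Everything else is routine manipulation of suprema and infima, plus the definition of a correspondence.
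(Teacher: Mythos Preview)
The paper does not supply its own proof of this theorem; it is quoted from~\cite{BurBurIva}. Your argument is correct and is the standard one. In fact, your ``bridging'' pseudometric in the first half is exactly the function $\rho_R$ that the paper introduces in the Construction preceding Theorem~\ref{thm:correspondence-to-pseudometric}, so your first inequality amounts to Theorem~\ref{thm:correspondence-to-pseudometric} combined with Theorem~\ref{thm:dGHasInfHasudorffForPseudometrics}. Your verification of the triangle inequality is slightly informally phrased (the ``chain $x\to y\to x''\to y''$'' is not literally one of the triangle cases), but the substance is right: the only nontrivial case is $p,r$ on one side and $q$ on the other, and there a single use of the distortion bound, absorbed by the two $d/2$ terms, does the job. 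For the reverse inequality your choice-based correspondence works; a minor simplification is to take $R=\{(x,y):\rho(x,y)\le r\}$ directly, which avoids any choice and yields the same estimate $\dis R\le 2r$.
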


The next result follows immediately from~\ref{prop:dis_closure} and~\ref{th:GH-metri-and-relations}.

\begin{cor}\label{cor:dis_closed}
For and $X$ and $Y$ we have
$$
d_{GH}(X,Y)=\frac12\inf\bigl\{\dis R:R\in\cR_c(X,Y)\bigr\}.
$$
\end{cor}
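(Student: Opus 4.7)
The plan is to observe that the infimum in Theorem \ref{th:GH-metri-and-relations} is already achieved on the smaller set $\cR_c(X,Y)$, so passing from $\cR(X,Y)$ to $\cR_c(X,Y)$ does not change its value. The two ingredients are already on the table: Theorem \ref{th:GH-metri-and-relations} identifies $2d_{GH}(X,Y)$ with the infimum of $\dis R$ over all correspondences, and Proposition \ref{prop:dis_closure} says that taking closures preserves distortion.

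Concretely, I would argue by two inequalities. The inequality
$$
\tfrac12\inf\bigl\{\dis R : R\in\cR_c(X,Y)\bigr\}\ge d_{GH}(X,Y)
$$
is immediate from Theorem \ref{th:GH-metri-and-relations}, since $\cR_c(X,Y)\subset\cR(X,Y)$ and hence the infimum over the smaller family cannot be smaller. For the reverse inequality, let $R\in\cR(X,Y)$ be arbitrary and take its closure $\bR$ in $X\times Y$. The remark following the definition of a correspondence (closure of a correspondence in a product of topological spaces is again a correspondence) gives $\bR\in\cR_c(X,Y)$, and Proposition \ref{prop:dis_closure} yields $\dis\bR=\dis R$. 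Therefore
$$
\inf\bigl\{\dis R:R\in\cR_c(X,Y)\bigr\}\le\dis\bR=\dis R,
$$
and taking infimum over $R\in\cR(X,Y)$ together with Theorem \ref{th:GH-metri-and-relations} gives the remaining inequality.

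There is no real obstacle here: the only point that requires care is to invoke the closure-of-a-correspondence remark to guarantee $\bR\in\cR_c(X,Y)$ (so that the infimum on the closed side is taken over a nonempty set containing $\bR$), and to cite Proposition \ref{prop:dis_closure} for $\dis\bR=\dis R$. Both are supplied in the excerpt, so the corollary is a one-line consequence once these two facts are combined.
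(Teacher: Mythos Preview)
Your argument is correct and is exactly the approach the paper intends: it states that the corollary follows immediately from Proposition~\ref{prop:dis_closure} and Theorem~\ref{th:GH-metri-and-relations}, which (together with the remark that closures of correspondences are correspondences) is precisely what you unpack.
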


The next construction establishes a link between correspondences from $\cR(X,Y)$ and pseudometrics on $X\sqcup Y$.

\begin{constr}
Consider arbitrary metric spaces $X$ and $Y$, and some correspondence $R\in\cR(X,Y)$. Suppose that $\dis R<\infty$. Extend the metrics of $X$ and $Y$ upto a symmetric function $\r_R$ defined on pairs of points from $X\sqcup Y$: for $x\in X$ and $y\in Y$ put
$$
\r_R(x,y)=\r_R(y,x)=\inf\bigl\{|xx'|+|yy'|+\frac12\dis R:(x',y')\in R\bigr\}.
$$
\end{constr}

\begin{thm}\label{thm:correspondence-to-pseudometric}
The function $\r_R$ is a pseudometric, and $d_H(X,Y,\r_R)=\frac12\dis R$.
\end{thm}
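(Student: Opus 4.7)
The plan is to verify in turn that $\r_R$ satisfies the pseudometric axioms on $X\sqcup Y$ and then compute the Hausdorff distance between the images of $X$ and $Y$ under the canonical projection. Symmetry and non-negativity of $\r_R$ are immediate from the formula, as is $\r_R(z,z)=0$ for $z$ in either $X$ or $Y$. For the triangle inequality, configurations with all three points in $X$ or all three in $Y$ follow from the original metrics, so the real content is the two mixed configurations: two points in one space and one in the other.

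Consider first $x_1,x_2\in X$ and $y\in Y$; three inequalities must be checked. The inequality $\r_R(x_i,y)\le|x_1x_2|+\r_R(x_{3-i},y)$ is direct: take a near-optimal witness $(x',y')\in R$ for $\r_R(x_{3-i},y)$, then $|x_ix'|\le|x_ix_{3-i}|+|x_{3-i}x'|$ shows that $(x',y')$ is almost as good a witness for $\r_R(x_i,y)$. The harder inequality $|x_1x_2|\le\r_R(x_1,y)+\r_R(y,x_2)$ is where the additive $\frac12\dis R$ terms pay off: for near-optimal witnesses $(x_i',y_i')\in R$ of $\r_R(x_i,y)$ one estimates $|x_1x_2|\le|x_1x_1'|+|x_1'x_2'|+|x_2'x_2|$, then bounds $|x_1'x_2'|\le|y_1'y_2'|+\dis R\le|y_1'y|+|yy_2'|+\dis R$ using that both pairs lie in $R$; the two halves of $\dis R$ get absorbed into the two $\r_R$ terms. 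The symmetric mixed case with two points in $Y$ and one in $X$ is handled identically.

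For the Hausdorff distance identity, both inequalities need checking. For the upper bound, use that $R$ is a correspondence: given $x\in X$ there exists $y\in Y$ with $(x,y)\in R$, and taking this very pair as the witness in the infimum yields $\r_R(x,y)\le\frac12\dis R$; the analogous bound holds for each $y\in Y$. For the matching lower bound, observe that every term in the infimum defining $\r_R(x,y)$ is at least $\frac12\dis R$, since $|xx'|$ and $|yy'|$ are non-negative; hence $\r_R(x,y)\ge\frac12\dis R$ for every cross pair, which forces $d_H(\pi(X),\pi(Y))\ge\frac12\dis R$.

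The main obstacle is the first triangle inequality in the mixed case, where the $X$-side bound $|x_1'x_2'|$ must be converted into a $Y$-side bound $|y_1'y_2'|$ using the distortion, and the resulting $\dis R$ has to be split as $\frac12\dis R+\frac12\dis R$ across the two cross-distance terms. This is exactly what the chosen additive constant $\frac12\dis R$ in the definition of $\r_R$ is tailored for, and once that bookkeeping is in place the rest of the argument is routine.
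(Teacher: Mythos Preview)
Your argument is correct and complete: the mixed-case triangle inequality, with the key step $|x_1'x_2'|\le|y_1'y_2'|+\dis R$ and the splitting of $\dis R$ as $\frac12\dis R+\frac12\dis R$ across the two cross terms, is exactly the standard computation, and the two-sided Hausdorff estimate follows as you indicate. The paper itself states this theorem without proof, so there is nothing to compare against; your write-up supplies precisely the routine verification that the paper omits.
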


\section{Realization of Gromov--Hausdorff distance}
\markright{\thesection.~Realization of Gromov--Hausdorff distance}

Let $X$ and $Y$ be arbitrary metric spaces.

\begin{agree}
In what follows, when we work with the metric space $X\x Y$, we always suppose that its distance function is
$$
\bigl|(x,y)(x',y')\bigr|=\max\bigl\{|xx'|,\,|yy'|\bigr\},
$$
and just this metric generates the Hausdorff distance on $\cP(X\x Y)$. Thus, the space $\cP(X,Y)=\cP(X\x Y)$ and all its subspaces (in particular, $\cR(X,Y)$) are supposed to be endowed with the distances of $\cP(X\x Y)$.
\end{agree}

\begin{rk}
If $X,\,Y\in\cM$, then $X\x Y\in\cM$, $\cP_c(X,Y)=\cH(X\x Y)$, and, by~\ref{thm:Hausd_dist_comp}, we have $\cP_c(X,Y)\in\cM$.
\end{rk}

\begin{prop}\label{prop:Rc-compact}
For $X,\,Y\in\cM$ the set $\cR_c(X,Y)$ is closed in $\cP_c(X,Y)$, thus, $\cR_c(X,Y)\in\cM$.
\end{prop}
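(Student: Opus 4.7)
The plan is to prove that $\cR_c(X,Y)$ is sequentially closed in $\cP_c(X,Y)$; compactness then comes for free from the remark, since a closed subset of the compact space $\cP_c(X,Y)\in\cM$ is itself compact.

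So I would take a sequence $R_n\in\cR_c(X,Y)$ converging in Hausdorff distance to some $R\in\cP_c(X,Y)$, and show $R\in\cR_c(X,Y)$. Since $R$ is already closed, it suffices to verify that $\pi_X(R)=X$ and $\pi_Y(R)=Y$. For surjectivity of $\pi_X$, fix $x\in X$. For each $n$, correspondenceness of $R_n$ yields some $y_n\in Y$ with $(x,y_n)\in R_n$. By compactness of $Y$, pass to a subsequence so that $y_{n_k}\to y\in Y$; then the points $p_k=(x,y_{n_k})\in R_{n_k}$ converge in $X\x Y$ to $p=(x,y)$. The standard fact I would invoke (or prove in one line) is that if $R_{n_k}\to R$ in Hausdorff distance and $p_k\in R_{n_k}$ with $p_k\to p$, then $p\in R$: indeed, the distance from $p$ to $R$ is bounded by $|pp_k|+d(p_k,R)\le|pp_k|+d_H(R_{n_k},R)\to 0$, and $R$ is closed. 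Hence $(x,y)\in R$, so $x\in\pi_X(R)$. The argument for $\pi_Y$ is symmetric.

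There is no serious obstacle here; the only thing worth being careful about is the metric on $X\x Y$ (the $\max$ metric fixed in the agreement) so that convergence of $y_{n_k}$ to $y$ in $Y$ really gives convergence of $(x,y_{n_k})$ to $(x,y)$ in $X\x Y$, which it does trivially. The compactness of $Y$ is what lets us extract a limit point to witness the preimage of $x$, and compactness of $X\x Y$ together with $\cP_c(X,Y)=\cH(X\x Y)\in\cM$ is what lets us conclude $\cR_c(X,Y)\in\cM$ from closedness.
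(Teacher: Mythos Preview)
Your argument is correct. The paper takes a different but equally short route: rather than proving sequential closedness, it shows directly that the complement $\cP_c(X,Y)\sm\cR_c(X,Y)$ is open. Given a closed relation $\s$ that is not a correspondence, say with $\pi_X(\s)\ne X$, the paper uses compactness of $\s$ to conclude that $\pi_X(\s)$ is compact, hence closed, then picks $x\in X$ and $\e>0$ with $U_\e(x)\cap\pi_X(\s)=\0$; any $\s'$ sufficiently close to $\s$ in Hausdorff distance still misses $x$ in its $X$-projection and therefore cannot be a correspondence. So your proof invokes compactness of $Y$ (to extract the limit witness $y$), while the paper invokes compactness of $\s\ss X\x Y$ (to get a closed projection and a separating ball). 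Both are elementary one-paragraph arguments; neither offers a real advantage over the other, and your sequential version makes the ``limit of correspondences is a correspondence'' intuition more explicit.
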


\begin{proof}
It is suffice to show that for each $\s\in\cP_c(X,Y)\sm\cR_c(X,Y)$ there exists a neighborhood $U$ which does not intersect $\cR_c(X,Y)$. Since $\s\not\in\cR(X,Y)$, then either $\pi_X(\s)\ne X$, or $\pi_Y(\s)\ne Y$, where $\pi_X$ and $\pi_Y$ are the canonical projections. To be definite, suppose that the first condition holds, i.e., there exists $x\in X\sm\pi_X(\s)$. Since $\s$ is a closed subset of the compact $X\x Y$, then it is compact itself, and therefore, $\pi_X(\s)$ is compact in $X$, thus, $\pi_X(\s)$ is closed. The latter implies that there exists an open ball $U_\e(x)$ such that $U_\e(x)\cap\pi_X(\s)=\0$. So, one can take $U_\e(x)\x Y$ for $U$.
\end{proof}

Define a function $f\:(X\x Y)\x(X\x Y)\to\R$ as $f(x,y,x',y')=\bigl||xx'|-|yy'|\bigr|$. Clearly that $f$ is continuous. Notice that for each $\s\in\cP(X,Y)$ we have
$$
\dis\s=\sup\bigl\{f(x,y,x',y'):(x,y),\,(x',y')\in\s\bigr\}=\sup f|_{\s\x\s}.
$$

\begin{prop}\label{prop:dis_continuous}
If $X,\,Y\in\cM$, then the function $\dis\:\cP_c(X,Y)\to\R$ is continuous.
\end{prop}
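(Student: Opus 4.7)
The plan is to exploit that $\cP_c(X,Y)=\cH(X\x Y)$ is a compact metric space (under the Hausdorff distance induced by the max metric on $X\x Y$), and that on the compact space $(X\x Y)\x(X\x Y)$ the function $f(x,y,x',y')=\bigl||xx'|-|yy'|\bigr|$ is uniformly continuous. Writing $\dis\s=\sup f|_{\s\x\s}$, the goal is reduced to showing that (i) the squaring map $\s\mapsto\s\x\s$ is continuous from $\cH(X\x Y)$ to $\cH\bigl((X\x Y)\x(X\x Y)\bigr)$, and (ii) passing to the supremum of a fixed uniformly continuous function is continuous on $\cH\bigl((X\x Y)\x(X\x Y)\bigr)$.

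For (i), I would show the stronger statement that the squaring is $1$-Lipschitz in Hausdorff distance: if $\s,\s'\in\cP_c(X,Y)$ and $(a,b)\in\s\x\s$, then by compactness of $\s'$ there exist $a',b'\in\s'$ with $|aa'|=|a\s'|\le d_H(\s,\s')$ and $|bb'|\le d_H(\s,\s')$, hence under the max metric $\bigl|(a,b)(a',b')\bigr|\le d_H(\s,\s')$. By symmetry, $d_H(\s\x\s,\s'\x\s')\le d_H(\s,\s')$.

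For (ii), I would prove the following general fact: if $g$ is continuous on a compact metric space $Z$ with modulus of continuity $\omega_g$, and $A,B$ are nonempty compact subsets of $Z$, then $|\sup_A g-\sup_B g|\le\omega_g\bigl(d_H(A,B)\bigr)$. This follows because for any $a\in A$ one can pick $b\in B$ with $|ab|\le d_H(A,B)$, whence $g(a)\le g(b)+\omega_g\bigl(d_H(A,B)\bigr)\le\sup_B g+\omega_g\bigl(d_H(A,B)\bigr)$; take the sup over $a$ and symmetrize.

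Combining the two steps with $Z=(X\x Y)\x(X\x Y)$ and $g=f$, one obtains
$$
\bigl|\dis\s-\dis\s'\bigr|=\bigl|\sup f|_{\s\x\s}-\sup f|_{\s'\x\s'}\bigr|\le\omega_f\bigl(d_H(\s\x\s,\s'\x\s')\bigr)\le\omega_f\bigl(d_H(\s,\s')\bigr),
$$
which yields not only continuity but uniform continuity of $\dis$ on $\cP_c(X,Y)$. I do not expect a real obstacle here; the only point needing care is checking that the Hausdorff distance on the product $(X\x Y)\x(X\x Y)$ is indeed induced by the max metric (consistent with the paper's agreement), so that the $1$-Lipschitz estimate on squaring holds without a constant factor.
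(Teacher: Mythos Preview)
Your proof is correct and follows essentially the same idea as the paper's, only organized more explicitly. The paper argues directly: by uniform continuity of $f$ on the compact $(X\x Y)\x(X\x Y)$, pick $\dl$ for a given $\e$; if $d_H(\s,\s')<\dl$ then $\s'$ lies in the $\dl$-thickening $U$ of $\s$, so $\dis\s'=\sup f|_{\s'\x\s'}\le\sup f|_{U\x U}\le\dis\s+\e$, and symmetrize. Your version factors this into the two clean lemmas (i) $\s\mapsto\s\x\s$ is $1$-Lipschitz for the Hausdorff distance and (ii) $A\mapsto\sup_A g$ has modulus $\omega_g$; the paper is effectively doing both at once via the inclusion $\s'\x\s'\ss U\x U$. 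Your packaging has the small bonus of yielding an explicit modulus of continuity $\omega_f$ for $\dis$, hence uniform continuity on $\cP_c(X,Y)$, which the paper does not state (though it is implicit, since $\dl$ depends only on $\e$). The caveat you flag about the max metric on $(X\x Y)\x(X\x Y)$ is harmless: it is exactly what makes the $1$-Lipschitz estimate in (i) work without a constant.
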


\begin{proof}
Since $(X\x Y)\x(X\x Y)$ is compact, then the function $f$ is uniformly continuous, thus for any $\s\in\cP_c(X,Y)$ and any $\e>0$ there exists $\dl>0$ such that for the open ball $U=U_\dl^{X\x Y}(\s)\ss X\x Y$ of radius $\dl$ centered at $\s$ it holds
$$
\sup f|_{U\x U}\le\sup f|_{\s\x\s}+\e.
$$
By $V$ we denote the open ball $U_\dl^{\cP_c(X,Y)}(\s)\ss\cP_c(X,Y)$ of radius $\dl$ centered at $\s$. Since for any $\s'\in V$ we have $\s'\ss U$, then it follows that
$$
\dis\s'=\sup f|_{\s'\x\s'}\le\sup f|_{U\x U}\le\sup f|_{\s\x\s}+\e=\dis\s+\e.
$$
Swapping $\s$ and $\s'$, we get $|\dis\s-\dis\s'|\le\e$, and hence, the function $\dis$ is continuous.
\end{proof}

\begin{dfn}
A correspondence $R\in\cR(X,Y)$ is called \emph{optimal\/} if $d_{GH}(X,Y)=\frac12\dis R$. By $\cR_\opt(X,Y)$ we denote the set of all optimal correspondences between $X$ and $Y$.
\end{dfn}

\begin{thm}\label{thm:optimal-correspondence-exists}
For any $X,\,Y\in\cM$ we have $\cR_\opt(X,Y)\ne\0$.
\end{thm}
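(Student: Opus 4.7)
My plan is to assemble the three preparatory results already proved in the excerpt: Corollary~\ref{cor:dis_closed} (which lets us restrict attention to closed correspondences), Proposition~\ref{prop:Rc-compact} (which says $\cR_c(X,Y)$ is compact when $X,Y\in\cM$), and Proposition~\ref{prop:dis_continuous} (continuity of the distortion functional). Once these are in hand, the theorem is an immediate application of the classical fact that a continuous real-valued function on a nonempty compact space attains its infimum.

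Concretely, I would first note that $\cR_c(X,Y)$ is nonempty: the whole product $X\x Y$ is itself a closed correspondence. By Proposition~\ref{prop:Rc-compact}, $\cR_c(X,Y)$ is a closed subspace of the compact space $\cP_c(X,Y)=\cH(X\x Y)$, hence compact. The function $\dis\:\cP_c(X,Y)\to\R$ is continuous by Proposition~\ref{prop:dis_continuous}, and therefore so is its restriction to $\cR_c(X,Y)$. Consequently, there exists some $R\in\cR_c(X,Y)$ at which $\dis$ attains its minimum on $\cR_c(X,Y)$, i.e.,
$$
\dis R=\inf\bigl\{\dis R':R'\in\cR_c(X,Y)\bigr\}.
$$

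Finally, I would invoke Corollary~\ref{cor:dis_closed}, which identifies this infimum with $2\,d_{GH}(X,Y)$. Thus $\frac12\dis R=d_{GH}(X,Y)$, so $R\in\cR_\opt(X,Y)$, proving $\cR_\opt(X,Y)\neq\0$.

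There is no real obstacle to overcome at this stage; all the heavy lifting has already been done in the preceding propositions. The only thing worth double-checking is that the hypothesis $X,Y\in\cM$ is genuinely used in \emph{both} Propositions~\ref{prop:Rc-compact} and~\ref{prop:dis_continuous} (compactness of $X\x Y$ is needed to get compactness of $\cP_c(X,Y)$ via Theorem~\ref{thm:Hausd_dist_comp}, and also to get uniform continuity of $f$ so that $\dis$ is continuous). Nonemptiness of $\cR_c(X,Y)$ via $X\x Y$ itself is trivial but worth explicitly noting, since continuous-functions-attain-minima requires a nonempty compact domain.
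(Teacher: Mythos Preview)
Your proposal is correct and follows essentially the same route as the paper's own proof: use Proposition~\ref{prop:Rc-compact} for compactness of $\cR_c(X,Y)$, Proposition~\ref{prop:dis_continuous} for continuity of $\dis$, conclude that the minimum is attained, and identify it with $2\,d_{GH}(X,Y)$ via Corollary~\ref{cor:dis_closed}. Your explicit remark that $\cR_c(X,Y)\ne\0$ (via $X\x Y$) is a small but welcome addition that the paper leaves implicit.
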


\begin{proof}
By~\ref{prop:dis_continuous}, the function $\dis\:\cR_c(X,Y)\to\R$ is continuous, and by~\ref{prop:Rc-compact} the space $\cR_c(X,Y)$ is compact, thus $\dis$ attains its least value, half of which equals $d_{GH}(X,Y)$ by~\ref{cor:dis_closed}. Therefore, each correspondence $R$ which this least value is attained at is optimal.
\end{proof}

From~\ref{thm:optimal-correspondence-exists} and \ref{thm:correspondence-to-pseudometric} we immediately get the following result.

\begin{cor}\label{cor:realization}
For each $X,\,Y\in\cM$ there exists
\begin{enumerate}
\item a correspondence $R\in\cR(X,Y)$ such that $d_{GH}(X,Y)=\frac12\dis R$\rom;
\item a pseudometric $\r$ on $X\sqcup Y$ such that $d_{GH}(X,Y)=d_H(X,Y,\r)$\rom;
\item a metric space $Z$ and isometric embeddings of $X$ and $Y$ into $Z$ such that the Hausdorff distance between their images equals $d_{GH}(X,Y)$.
\end{enumerate}
\end{cor}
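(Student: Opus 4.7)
The plan is to obtain all three items by chaining together the two cited results, using the optimal correspondence as the common seed.

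For item (1), I simply invoke Theorem \ref{thm:optimal-correspondence-exists}: since $X,Y\in\cM$ the set $\cR_\opt(X,Y)$ is nonempty, so any element $R$ of it witnesses the required equality $d_{GH}(X,Y)=\tfrac12\dis R$. Note that because $X$ and $Y$ are compact, their diameters are finite, hence $\dis R<\infty$ for every $R\in\cR(X,Y)$; in particular, the Construction preceding Theorem \ref{thm:correspondence-to-pseudometric} applies to this $R$.

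For item (2), I feed the optimal $R$ from (1) into that Construction to obtain the function $\r=\r_R$ on $(X\sqcup Y)\x(X\sqcup Y)$. By Theorem \ref{thm:correspondence-to-pseudometric}, $\r_R$ is a pseudometric which restricts to the original metrics on $X$ and $Y$ (so $\r\in\cD(X,Y)$), and satisfies
\[
d_H(X,Y,\r_R)=\tfrac12\dis R=d_{GH}(X,Y),
\]
the last equality coming from the optimality of $R$.

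For item (3), I take $Z=(X\sqcup Y)/\r$, as in the paragraph preceding Theorem \ref{thm:dGHasInfHasudorffForPseudometrics}, with $\pi\:X\sqcup Y\to Z$ the canonical projection. As noted there, the restrictions $\pi|_X$ and $\pi|_Y$ are isometric embeddings, and by definition $d_H\bigl(\pi(X),\pi(Y)\bigr)=d_H(X,Y,\r)$, which by (2) equals $d_{GH}(X,Y)$. There is no real obstacle here; the only thing to verify is that the pieces fit together, namely that the Construction is legitimately applicable (which requires $\dis R<\infty$, handled by compactness) and that the resulting $\r$ actually lies in $\cD(X,Y)$ so that the quotient construction from the preliminaries can be used without modification.
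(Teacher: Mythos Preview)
Your proof is correct and follows exactly the route the paper indicates: the paper's entire argument is the one-line remark that the result follows immediately from Theorem~\ref{thm:optimal-correspondence-exists} and Theorem~\ref{thm:correspondence-to-pseudometric}, and you have simply unpacked this chain (optimal $R$ $\to$ pseudometric $\r_R$ $\to$ quotient space $Z=(X\sqcup Y)/\r_R$) with the appropriate care about $\dis R<\infty$ and $\r_R\in\cD(X,Y)$.
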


\section{Optimal correspondences and shortest curves}
\markright{\thesection.~Optimal correspondences and shortest curves}

In~\cite{IvaNikolaevaTuz}, for each pair of finite metric spaces, using an optimal correspondence between these spaces which evidently exists, we constructed a shortest curve in $\cM$ connecting those spaces. To prove the existence of shortest curves for arbitrary compact metric spaces, we used the limiting process and Gromov precompact criterion. Now, \ref{cor:realization} enables us to construct the shortest curve directly by means of the optimal correspondences. Repeating word-by-word the proof of the main result from~\cite{IvaNikolaevaTuz}, we get the following theorem.

\begin{thm}
For any $X,\,Y\in\cM$ and each $R\in\cR_\opt(X,Y)$ the family $R_t$, $t\in[0,1]$, of compact metric spaces such that $R_0=X$, $R_1=Y$, and for $t\in(0,1)$ the space $R_t$ is equal to $(R,\r_t)$, where
$$
\r_t\bigl((x,y),(x',y')\bigr)=(1-t)|xx'|+t\,|yy'|,
$$
is a shortest curve in $\cM$ connecting $X$ and $Y$.
\end{thm}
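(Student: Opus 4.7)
The plan is to follow, as the authors indicate, the argument of~\cite{IvaNikolaevaTuz}. First I would observe that $\r_t$ is a pseudometric on $R$, being a convex combination of the two pullback pseudometrics along $\pi_X$ and $\pi_Y$. Passing to the metric quotient then yields a genuine metric space $R_t$; it is compact as a continuous image of $R$, which may be assumed closed in $X\x Y$ (hence compact) since $\dis\bR=\dis R$ by~\ref{prop:dis_closure} and $\bR$ is again a correspondence. At the endpoints, $\r_0\bigl((x,y),(x',y')\bigr)=|xx'|$, so $R_0$ is isometric to $X$ via $\pi_X$, and symmetrically $R_1\cong Y$, matching the convention in the statement.

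The key step is the Lipschitz estimate $d_{GH}(R_s,R_t)\le|s-t|\,d_{GH}(X,Y)$ for all $s,t\in[0,1]$. To prove it, I would exhibit the \emph{diagonal} correspondence
\[
R_{s,t}=\bigl\{\bigl([(x,y)]_s,[(x,y)]_t\bigr):(x,y)\in R\bigr\}\in\cR(R_s,R_t),
\]
which is surjective on both factors. A direct computation yields
\[
\dis R_{s,t}=\sup\bigl|\r_s\bigl((x,y),(x',y')\bigr)-\r_t\bigl((x,y),(x',y')\bigr)\bigr|=|s-t|\sup\bigl||xx'|-|yy'|\bigr|=|s-t|\dis R,
\]
so, invoking optimality of $R$ together with Theorem~\ref{th:GH-metri-and-relations}, one gets $d_{GH}(R_s,R_t)\le\tfrac12|s-t|\dis R=|s-t|\,d_{GH}(X,Y)$.

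Finally I would close the argument by the standard length computation: for every partition $0=t_0<\cdots<t_n=1$,
\[
\sum_i d_{GH}(R_{t_i},R_{t_{i+1}})\le\sum_i(t_{i+1}-t_i)\,d_{GH}(X,Y)=d_{GH}(X,Y),
\]
so the length of $t\mapsto R_t$ is at most $d_{GH}(X,Y)$; since the length of any curve is bounded below by the distance between its endpoints, equality holds and the curve is shortest. The main technical subtlety is the bookkeeping around the quotients: one must check that $R_{s,t}$ is a well-defined (if not single-valued) subset of $R_s\x R_t$ and that its distortion, computed via the quotient metrics, indeed equals the expression above. This becomes clean once one notes that the quotient metric on $R_t$ is exactly $\r_t$ read on its zero-distance classes, so distances in $R_s$ and $R_t$ can be computed directly from representatives in $R$.
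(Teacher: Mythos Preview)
Your proposal is correct and matches the paper's approach exactly: the paper gives no self-contained proof but simply says to repeat word-by-word the argument from~\cite{IvaNikolaevaTuz}, and you have accurately reconstructed that argument (pseudometric $\r_t$, diagonal correspondence between $R_s$ and $R_t$, the computation $\dis R_{s,t}=|s-t|\dis R$, and the partition estimate giving length $\le d_{GH}(X,Y)$). One small remark: for $t\in(0,1)$ the function $\r_t$ is already a genuine metric on $R$ (it vanishes only when $x=x'$ and $y=y'$), so the quotient step is needed only at the endpoints; and your ``may be assumed closed'' is the right way to secure compactness, though strictly speaking it replaces a non-closed optimal $R$ by $\bR$, which the paper's statement tacitly permits.
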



\begin{thebibliography}{99}
\bibitem{IvaNikolaevaTuz} Ivanov A.O., Nikolaeva N.K., Tuzhilin A.A. \emph{The Gromov-Hausdorff Metric on the Space of Compact Metric Spaces is Strictly Intrinsic}. ArXiv e-prints, arXiv:1504.03830, 2015.
\bibitem{BurBurIva} Burago D., Burago Yu., Ivanov S. \emph{A Course in Metric Geometry}. Graduate Studies in Mathematics, vol.33. A.M.S., Providence, RI, 2001.
\bibitem{blog} \url{http://mathoverflow.net/questions/135184/for-which-metric-}\\ \url{spaces-is-gromov-hausdorff-distance-actually-achieved?rq=1}
\bibitem{Memoli} Chowdhury S., Memoli F. \emph{Constructing Geodesics on the Space of Compact Metric Spaces}. ArXiv e-prints, arXiv:1603.02385, 2016.
\end{thebibliography}
\end{document}